\documentclass[11pt]{article}
\usepackage{amsmath,amssymb,amsbsy,amsfonts,amsthm,latexsym,
            amsopn,amstext,amsxtra,euscript,amscd,amsthm}

\newtheorem{lem}{Lemma}
\newtheorem{lemma}[lem]{Lemma}

\newtheorem{thm}{Theorem}
\newtheorem{theorem}[thm]{Theorem}



\def\\{\cr}
\def\({\left(}
\def\){\right)}
\def\[{\left[}
\def\]{\right]}
\def\<{\langle}
\def\>{\rangle}

\def\cP{{\mathcal P}}

\def\eps{\varepsilon}

\begin{document}

\title{On shifted primes with large prime factors and their products}

\author{Florian Luca\\
{Mathematical Institute, UNAM}\\
{04510 Mexico DF, Mexico}\\ 
{\tt fluca@matmor.unam.mx}
\and
{Ricardo Menares}\\
{Instituto de Matem\'aticas}\\
{Pontificia Universidad Cat\'olica de Valpara\'{\i}so}\\
{Blanco Viel 596, Cerro Bar\'on, Valpara\'{\i}so, Chile}\\
{\tt ricardo.menares@ucv.cl}
\and
{Amalia~Pizarro-Madariaga} \\
{Departamento de Matem\'aticas} \\
{Universidad de Valpara\'{\i}so, Chile}\\
{\tt amalia.pizarro@uv.cl}
}

\date{\today}

\pagenumbering{arabic}

\maketitle

\begin{abstract}
We estimate from below the lower density of the set of prime numbers $p$ such that $p-1$ has a prime factor of size at least $p^c$, where $1/4 \le  c \leq 1/2$.
We  also establish upper and lower bounds on  the counting function  of the set of positive integers $n\le x$ with exactly $k$ prime factors, counted with or without multiplicity, such that the largest prime factor of ${\text{\rm gcd}}(p-1: p\mid n)$ exceeds $n^{1/2k}$. 
\end{abstract}
\tableofcontents 
\section{Introduction}

For  an integer $n$ put $P(n)$ for the maximum prime factor of $n$ with the convention that $P(0)=P(\pm 1)=1$. A lot of work has been done understanding 
the distribution of $P(p-1)$ for prime numbers $p$. The extreme cases $P(p-1)=2$ and $P(p-1)=(p-1)/2$ correspond to Fermat primes and Sophie-Germain primes, respectively. 
Not only do we not know if there are infinitely many primes of these kinds, but we do not know whether for each $\varepsilon>0$ arbitrarily small there exist infinitely many primes 
$p$ with $P(p-1)<p^{\varepsilon}$ or $P(p-1)>p^{1-\varepsilon}$. 

For a set ${\mathcal C}$ of positive integers and a positive real number $x$ we put ${\mathcal C}(x)={\mathcal C}\cap [1,x]$. Let 
$$
\cP_\eps := \{p \textrm{ prime : } P(p-1) \geq p^{1-\eps}\}, \quad \kappa(\eps) = \liminf_{x \to \infty} \frac{\#\cP_\eps(x)}{\pi(x)}.  
$$

Goldfeld  proved in \cite{Gold} that $\kappa(1/2) \geq 1/2 $.  It is not known whether  $\cP_{{1}/{2} }$ has a relative density, nor what this density could be in case it exists.  Fouvry  \cite{Fou85}, showed that there exists $\eps_0\in (0,1/3)$ such that $\kappa({\eps_0})>0$. Baker and Harman \cite{HaBa96}, found  $0<\eps_{1}<\eps_0$ such that $\cP_{\eps_{1}}$ is infinite.

In this article, we generalize Goldfeld's result in two  different directions. First, we estimate from below the lower  density of $\cP_{\eps}$ for all $\eps \in [1/2, 3/4]$. Secondly, we estimate the  counting function of the set of square free positive integers having prime divisors that, when shifted, share a large common prime factor.  Both questions are motivated by a technique used in \cite{BM} to bound from below the degree of the field of coefficients of newforms in terms of the level. A feature of the method in loc. cit. is that what is needed are values of $\eps$ such that $\kappa(\eps)$ is as large as possible.  Since $\kappa(\eps)$ is clearly an increasing function of $\eps$,  in contrast with the aforementioned works, which are focused in dealing with smaller and smaller  values of $\eps$, here we concentrate on the case where this parameter is bigger than $1/2$.

We obtain the following results.

\begin{theorem}
\label{primo}
Let $0 \leq \alpha  \le {1}/{4}$.  Let 
$$N_\alpha= \{  p \textrm{  prime such that } P(p-1) \geq p^{{1}/{2} -\alpha} \}.$$
Then 
$$\# N_\alpha(x) \geq \left(\frac{1}{2} +\alpha\right) \frac{x}{\log x} + E(x);\quad E(x)=\left\{\begin{matrix} {\displaystyle{O\left(\frac{x\log\log x}{(\log x)^2}\right)}} &  (\alpha<1/4)\\
{\displaystyle{O\left(\frac{x}{(\log x)^{5/3}}\right)}} &  (\alpha=1/4).\\
\end{matrix}\right.
$$ 
The implied constant depends on $\alpha$. In particular,
$$
\kappa(1/2+\alpha)\ge 1/2+\alpha\quad {\text{for~all}}\quad \alpha\in [0,1/4].
$$
\end{theorem}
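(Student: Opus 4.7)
My plan: set $y := x^{1/2-\alpha}$ and pass to the closely related set
\[\widetilde N(x) := \{p \le x \text{ prime} : P(p-1) > y\},\]
which is contained in $N_\alpha(x)$ because $p \le x$ forces $p^{1/2-\alpha} \le y$. It then suffices to prove the claimed lower bound for $\widetilde N(x)$.

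For each prime $p \le x$ I introduce the truncated log-radical $r(p) := \sum_{q \mid p-1,\; q > y} \log q$. Since $r(p) = 0$ when $p \notin \widetilde N(x)$ and $r(p) \le \log(p-1) < \log x$ otherwise, summing and switching the order of summation give
\[\widetilde N(x)\,\log x \;\ge\; \sum_{p \le x} r(p) \;=\; \sum_{q > y,\; q \text{ prime}} (\log q)\, \pi(x;q,1),\]
so the task reduces to showing that the right-hand side is $(1/2+\alpha)\,x$ plus an admissible error.

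I would write this as $\sum_{q \le x-1} - \sum_{q \le y}$. The full sum equals $\sum_p \log\mathrm{rad}(p-1)$; separating the squareful excess $\log n - \log\mathrm{rad}(n) = \sum_{q\mid n}(v_q(n)-1)\log q$ and swapping gives
\[\sum_{p\le x}\log\mathrm{rad}(p-1) = \sum_{p\le x}\log(p-1) - \sum_{q\text{ prime}}(\log q)\sum_{k\ge 2}\pi(x;q^k,1).\]
The prime-power term is $O(\pi(x)) = O(x/\log x)$ by Bombieri--Vinogradov applied to moduli $q^k \le x^{1/2}/\log^A x$ combined with elementary bounds (Brun--Titchmarsh or the trivial $\pi(x;q^k,1)\le x/q^k$) beyond; together with $\sum_p \log(p-1) = \vartheta(x) + O(\log\log x) = x + O(x/\log^A x)$ from the PNT, this yields a full sum of $x + O(x/\log x)$. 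Because $y \le x^{1/2}$ lies inside the Bombieri--Vinogradov range (modulo a log factor when $\alpha$ is close to $0$), the truncated sum is
\[\sum_{q \le y}(\log q)\pi(x;q,1) = \mathrm{li}(x)\sum_{q\le y}\frac{\log q}{\phi(q)} + O(x/\log^A x),\]
and Mertens' theorem $\sum_{q\le y}(\log q)/(q-1) = \log y + C + O(1/\log y) = (1/2-\alpha)\log x + O(1)$ converts this to $(1/2-\alpha)\,x + O(x/\log x)$. Subtracting and dividing by $\log x$ then yields $\widetilde N(x) \ge (1/2+\alpha)\,x/\log x + O(x/\log^2 x)$, and the theorem follows from $N_\alpha(x) \ge \widetilde N(x)$.

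The main obstacle will be tracking the error terms with the precision stated. The $\log\log x$ factor in $E(x)$ for $\alpha < 1/4$ comes from the $-A\log\log x$ correction to $\log y$ that is forced when one shrinks $y$ to $x^{1/2}/\log^A x$ to stay inside the BV range, an effect most severe as $\alpha\to 0$. The weaker exponent at $\alpha = 1/4$ reflects the fact that the moduli $q^2$ controlling the squareful excess are then close to $x^{1/2}$, sitting right at the BV threshold for $q^2$, and must be handled by a hybrid BV/Brun--Titchmarsh estimate that introduces the extra $\log^{1/3}x$ loss.
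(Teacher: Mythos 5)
Your argument is Goldfeld's method, which is exactly the route the paper takes: your reduction $\widetilde N(x)\,\log x\ge\sum_{q>y}(\log q)\pi(x;q,1)$ is the paper's passage from $N_c'(x)$ to $M_c(x)$ with $c=1/2-\alpha$, and your evaluation ``full sum minus the sum over $q\le y$'' via the prime number theorem, Bombieri--Vinogradov plus Mertens for small moduli, and Brun--Titchmarsh in the sliver $[x^{1/2}/(\log x)^B,\,x^{1/2}]$ is precisely the paper's combination of \eqref{basico} with Lemmas \ref{BT} and \ref{BV}. The one component you handle genuinely differently is the prime-power correction. The paper compares $M_c(x)$ with $L(x;x^c,x)=\sum_{x^c<\ell^k\le x}(\log \ell)\pi(x;\ell^k,1)$ and bounds the discrepancy in Lemma \ref{log} by splitting at $x^{1-d}(\log x)^r$ with $d=2(1-c)/3$, obtaining $O(x^{7/6-2c/3}/(\log x)^r)$; this is what forces $\alpha\le 1/4$ and produces the exponent $5/3$ at the endpoint. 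You instead subtract the squareful excess $\sum_q(\log q)\sum_{k\ge2}\pi(x;q^k,1)$ globally, and your claim that it is $O(x/\log x)$ uniformly is correct: Brun--Titchmarsh for $q^k\le x^{1/2}$ gives $\ll (x/\log x)\sum_q \log q/(q-1)^2\ll x/\log x$, and the trivial bound $\pi(x;q^k,1)\le x/q^k$ beyond gives $\ll x\sum_{q>x^{1/4}}\log q/q^2\ll x^{3/4}$ (Bombieri--Vinogradov is not even needed here). This is at least as strong as Lemma \ref{log} throughout $c\in[1/4,1/2]$, so your proof delivers the theorem, in fact with error $O(x\log\log x/(\log x)^2)$ even at $\alpha=1/4$. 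Consequently your closing diagnosis --- that the $(\log x)^{5/3}$ at $\alpha=1/4$ reflects the moduli $q^2$ sitting at a Bombieri--Vinogradov threshold --- does not describe what happens in the paper: the weaker endpoint error there is an artifact of the cruder estimates in Lemma \ref{log} (in particular the bound $M_2^d\ll x^{d+1/2}$), not an intrinsic obstruction.
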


The case $\alpha=0$ is Goldfeld's result mentioned  above. Our proof of Theorem \ref{primo}  follows closely his method.

For any  $k\ge 1$ and $a\in (0,1/k)$, let 
$$
{\mathcal A}_{k,a}=\{n=p_1\cdots p_k, P\big(\gcd(p_1-1,\ldots,p_k-1)\big)>n^{a}\}.
$$
By Goldfeld's result,  $\#{\mathcal A}_{1,1/2}(x)\asymp x/\log x$.  Here, we prove the following result.

\begin{theorem}\label{compuesto}
If $k\ge 2$ and $a\in \big[1/(2k),17/(32k)\big)$ are fixed, then 
\begin{equation}
\label{eq:main}
\frac{x^{1-a(k-1)}}{(\log x)^{k+1}}\ll \#{\mathcal A}_{k,a}(x)\ll \frac{x^{1-a(k-1)}(\log\log x)^{k-1}}{(\log x)^2}.
\end{equation}
\end{theorem}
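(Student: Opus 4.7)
The plan is to set up a clean decomposition of $\#\cA_{k,a}(x)$ according to the distinguished large prime dividing the gcd, then to treat the upper and lower bounds separately via sieve and equidistribution arguments. The hypothesis $a \geq 1/(2k)$ enforces a uniqueness dichotomy: if two distinct primes $q_1 \neq q_2$, both exceeding $n^a$, divided every $p_i-1$, then each $p_i \geq q_1 q_2 + 1 > n^{2a}$, so $n = \prod p_i > n^{2ak} \geq n$, a contradiction. Hence each $n \in \cA_{k,a}(x)$ determines a unique such prime $q = q(n)$, giving the exact identity
\[
\#\cA_{k,a}(x) = \sum_{q \text{ prime}} \#B_q, \qquad B_q := \{n = p_1\cdots p_k \leq x : q \mid p_i-1 \;\forall i, \; q > n^a\}.
\]
Moreover the constraints $p_i > q$ and $q > n^a$ force $q < x^{1/k}$ and $n \leq M_q := \min(x, q^{1/a})$.

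For the upper bound, for each prime $q$ I would bound $\#B_q \leq \pi_k(M_q; q, 1)$, the count of integers $\leq M_q$ with exactly $k$ prime factors all congruent to $1 \pmod{q}$. Iterated Brun--Titchmarsh combined with the Mertens-type estimate $\sum_{p \leq y,\, p \equiv 1 (q)} 1/p \ll (\log\log y)/\phi(q)$ yields a Hardy--Ramanujan-type bound
\[
\pi_k(M; q, 1) \ll \frac{M (\log\log M)^{k-1}}{\phi(q)^k \log(M/q^k)}.
\]
Splitting the $q$-sum at $x^a$ and using PNT plus partial summation: for $q \geq x^a$ one has $\sum_{q > x^a,\,\text{prime}} q^{-k} \ll x^{-a(k-1)}/\log x$, while for $q < x^a$ one has $\sum_{q < x^a,\,\text{prime}} q^{1/a - k}/\log q \ll x^{1 - a(k-1)}/\log^2 x$. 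The two regimes combine to deliver the stated upper bound $x^{1-a(k-1)}(\log\log x)^{k-1}/\log^2 x$.

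For the lower bound, fix $Q = x^a$ and, for each prime $q \in [Q, 2Q]$, define $S_q = \{p \text{ prime} : p \leq x^{1/k},\ p \equiv 1 \pmod{q}\}$. Every $k$-element subset of $S_q$ produces an $n = p_1 \cdots p_k \leq x$ in $B_q$ (with $q > x^a \geq n^a$), so by the uniqueness of $q$,
\[
\#\cA_{k,a}(x) \geq \sum_{q \in [Q, 2Q],\,\text{prime}} \binom{|S_q|}{k}.
\]
The key input is $|S_q| \gg x^{1/k}/(q \log x)$ for a positive proportion of primes $q \in [Q, 2Q]$. Setting $y = x^{1/k}$, the hypothesis $a < 17/(32k)$ reads $Q \leq y^{17/32}$, so $Q$ sits slightly beyond the classical Bombieri--Vinogradov barrier $y^{1/2}$; one therefore invokes a Bombieri--Friedlander--Iwaniec-style extended mean-value theorem for primes in arithmetic progressions to moduli up to $y^{17/32}$. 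Granted this lower bound, $\binom{|S_q|}{k} \gg x/(q^k \log^k x)$, and summing over the $\gg Q/\log Q \gg x^a/\log x$ admissible primes produces $\gg x^{1 - a(k-1)}/\log^{k+1} x$. The main obstacle is precisely this extended equidistribution input: the ceiling $17/32$ is the modulus range up to which currently available mean-value theorems supply a positive proportion of well-distributed $q$, while the upper bound is by comparison routine once the uniqueness decomposition is in hand.
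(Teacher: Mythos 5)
Your proposal is essentially the paper's own argument: the upper bound via iterated Brun--Titchmarsh together with the Mertens-type estimate $\sum_{p\le y,\,p\equiv 1\,(q)}1/p\ll(\log\log y)/\phi(q)$, and the lower bound via the Banks--Shparlinski almost-all \emph{lower} bound for $\pi(y;q,1)$ for moduli up to $y^{17/32-\eps}$ (their Lemma 2.1, resting on Baker--Harman), which is precisely the ``extended equidistribution input'' you invoke and the true source of the $17/(32k)$ ceiling. The one point you gloss over is that your Hardy--Ramanujan-type bound $\pi_k(M;q,1)\ll M(\log\log M)^{k-1}/\bigl(\phi(q)^k\log(M/q^k)\bigr)$ degenerates when $q^k$ is close to $M_q$ (equivalently, when all cofactors $(p_i-1)/q$ are small), so the subsequent summation cannot treat $\log(M_q/q^k)$ as $\gg\log x$ uniformly; the paper disposes of exactly this regime by a trivial count (its set ${\mathcal B}_1$, where $\lambda_k\le x^{\delta}$ with $\delta=15(k-1)/(32k^2)$), a routine extra step your write-up would also need.
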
 
The case $a=1/(2k)$ is important for the results from \cite{BM}. We have the estimate 
\begin{equation}\label{crudo}
\#{\mathcal A}_{k,1/(2k)}(x)=x^{1/2+1/2k+o(1)}, \quad x\to\infty.
\end{equation} 

 Goldfeld's method does not seem to extend to the situation in Theorem \ref{compuesto} (see the last section). Instead, we follow a more direct method.  For the lower bound, we rely on a  refined version of the Brun-Titchmarsh inquality due to Banks and Shparlinsky  \cite{BS}.

We remark that both theorems presented here remain valid if, instead of considering  large factors of $p-1$, we look at large factors $p+n$ for an arbitrary nonzero fixed integer $n$. 

We leave as a problem for the reader to determine the exact order of magnitude of $\#{\mathcal A}_{k,a}(x)$, or an asymptotic for it.

Throughout this paper, we use $p,~q,~r$ with or without subscripts for primes. We use the Landau symbols $O,~o$ and the Vinogradov symbols $\ll$ and $\gg $ with their regular meaning.
The constants implied by them might depend on some other parameters such as $\alpha,~k,~\varepsilon$ which we will not indicate. 

\section{Proof of Theorem \ref{primo}}

We follow Goldfeld's general strategy. Put $c={1}/{2}-\alpha$. So, ${1}/{4} \le  c \leq {1}/{2}$.   Let  $$N_c'(x)=\#\{ p \leq x : p \textrm{ is prime and } P(p-1) \geq x^c \}.$$
Since $\#N_\alpha(x) \geq N_c'(x)$, it is enough to give a lower bound for $N_c'(x)$.
Put 
$$
M_c(x)= \sum_{p \leq x} \sum_{\substack{\ell\mid p-1\\ \ell \geq x^c}} \log \ell,$$ where $p$ and $\ell$ denote primes. Since 
$$
\sum_{\substack{\ell \mid p-1 \\ \ell \geq x^c}} \log \ell \left\{ \begin{array}{ll}
= 0, & \textrm{ if } P(p-1) < x^c; \\
\leq \log x, & \textrm{ otherwise }
\end{array} \right.$$
we have that  
$$
M_c(x) \leq \log x \sum_{\substack{p \leq x \\ P(p-1) \geq x^c}} 1 = N_c'(x) \log x.
$$
Hence, $N_c'(x) \geq M_c(x)/\log x.$ Then, in order to prove Theorem \ref{primo}, it is enough to show that 
\begin{equation}\label{primera reduccion}
M_c(x) = (1-c)x + F(x),\quad F(x)=\left\{\begin{matrix} {\displaystyle{O_c\left(\frac{x\log \log x}{\log x} \right)}}, & (c>1/4);\\
{\displaystyle{O\left(\frac{x}{(\log x)^{2/3}}\right)}}, & (c=1/4).\\ 
\end{matrix}\right.
\end{equation}

We denote by $\Lambda(\cdot)$ the von Mangoldt's function. As usual, $\pi(x;b,a)$ is the number of primes $q\le x$ in the arithmetic progression $q\equiv a\pmod b$.
We define
$$
L(x;u,v)=\sum_{u<m\le v} \Lambda(m)\pi(x;m,1).
$$

\begin{lemma}
\label{log}
Assume $1/4\le c\le 1/2$. Then
$$
L(x;x^c,x) = M_c(x) + O\left(\frac{x^{{7}/{6} -{2c}/{3}}}{(\log x)^r}\right),
$$
where $r=0$ when $c>1/4$ and $r=2/3$ when $c=1/4$.
\end{lemma}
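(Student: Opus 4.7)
The plan is to unpack $L(x;x^c,x)$ using the definition of the von Mangoldt function. Writing $\Lambda(m) = \log \ell$ when $m = \ell^k$ for a prime $\ell$ and integer $k \geq 1$, and $\Lambda(m)=0$ otherwise, I would split
$$
L(x;x^c,x) = \sum_{\substack{x^c < \ell \leq x\\ \ell \textrm{ prime}}} (\log \ell)\,\pi(x;\ell,1) \;+\; \sum_{k \geq 2} \sum_{\substack{x^c < \ell^k \leq x\\ \ell \textrm{ prime}}} (\log \ell)\, \pi(x;\ell^k,1).
$$

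The first sum equals $M_c(x)$ up to a negligible term. Interchanging the order of summation counts each prime pair $(p,\ell)$ with $p\le x$, $\ell \mid p-1$, and $\ell > x^c$ with weight $\log \ell$, and the bound $\ell \leq x$ is automatic since $\ell\mid p-1<x$. The only discrepancy with $M_c(x)$, which imposes $\ell \geq x^c$ instead of the strict inequality, occurs when $x^c$ is itself prime; then the missing term is at most $(c\log x)\,\pi(x;x^c,1) \ll x^{1-c}\log x$, which is safely absorbed by the claimed error since $(7/6 - 2c/3) - (1-c) = 1/6 + c/3 > 0$ for all $c\in[1/4,1/2]$.

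The second sum I would control by the trivial bound $\pi(x;\ell^k,1) \leq 1 + x/\ell^k$. The contribution of the $+1$ summand is bounded by $\sum_{k \geq 2}\sum_{\ell \leq x^{1/k}} \log \ell \ll x^{1/2}$ via Chebyshev, and the contribution of $x/\ell^k$ is dominated by $k=2$: partial summation gives $\sum_{\ell > x^{c/2}}(\log \ell)/\ell^2 \ll (\log x)/x^{c/2}$, producing $O(x^{1-c/2}\log x)$ overall, with the $k\geq 3$ pieces yielding strictly smaller powers of $x$. Since $(7/6-2c/3) - (1-c/2) = (1-c)/6 > 0$ for $c \leq 1/2$, this too is comfortably absorbed into $O(x^{7/6-2c/3}/(\log x)^r)$.

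In candor, this lemma is essentially a bookkeeping reduction and presents no genuine obstacle: its role is to trade the awkward double-prime sum $M_c(x)$, whose inner summation depends on a variable prime modulus, for the standard Chebyshev-weighted quantity $L(x;x^c,x)$ to which Bombieri--Vinogradov type equidistribution results apply directly in the next step of the argument. The generous error $O(x^{7/6-2c/3}/(\log x)^r)$ is calibrated so as to match the error that will arise in that subsequent analytic estimate, rather than the much smaller prime-power correction handled here.
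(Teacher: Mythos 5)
Your proof is correct, and it takes a genuinely different---in fact more elementary---route than the paper's. Both arguments reduce the lemma to bounding the prime--power contribution $\sum_{k\ge 2}\sum_{x^c<\ell^k\le x}(\log \ell)\,\pi(x;\ell^k,1)$, but the paper splits that sum at a threshold $x^{1-d}(\log x)^r$, applies Brun--Titchmarsh below the threshold and the trivial bound $\pi(x;m,1)\le x/m$ above it, and then chooses $d=\tfrac{2}{3}(1-c)$ to balance the two pieces; this balancing is exactly where the exponent $7/6-2c/3$ comes from. You dispense with Brun--Titchmarsh entirely, organize the sum by the exponent $k$, and use only $\pi(x;\ell^k,1)\le 1+x/\ell^k$; after summing the geometric tails this yields the sharper error $O(x^{1-c/2}\log x)+O(x^{1/2})$, which beats the stated $O(x^{7/6-2c/3}/(\log x)^r)$ by a positive power of $x$ (the exponents differ by $(1-c)/6\ge 1/12$ on $[1/4,1/2]$), so the lemma follows \emph{a fortiori}. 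The paper's loss relative to yours comes from bounding $\sum_{k\ge c\log x/\log\ell}\ell^{-k}\le 2x^{-c}$ uniformly for all $\ell$ up to $x^{(1-d)/2}$ instead of switching to the bound $2\ell^{-2}$ once $\ell>x^{c/2}$, as you effectively do. You are also more careful about the boundary term at $\ell=x^c$ (strict versus non-strict inequality), which the paper passes over in silence; your estimate $O(x^{1-c}\log x)$ for it is fine. Since the error term of the lemma only needs to be absorbed into $O(x\log\log x/\log x)$, respectively $O(x/(\log x)^{2/3})$, in the proof of Theorem \ref{primo}, nothing downstream changes, but your computation shows the lemma could be stated with a smaller error term.
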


\begin{proof} 
Let $0<d<1-c$ be a real number and $r \in (0,1)$.  We assume that $x$ is large enough so that the inequality $x^{1-d}(\log x)^{r}<x$ holds. We put
\begin{eqnarray*}
M_1^d(x) & = & \sum_{\substack{x^c < \ell^k \leq x^{1-d}(\log x)^{r} \\ \ell \textrm{ prime, } k\geq 2 }} \pi (x;\ell^k,1)\log \ell\\
M_2^d(x) & = & \sum_{\substack{x^{1-d}(\log x)^{r} < \ell^k \leq x \\ \ell \textrm{ prime, } k\geq 2 }}  \pi (x;\ell^k,1)\log \ell.
\end{eqnarray*}
Hence, 
\begin{equation}\label{sumas}
L(x;x^c,x)-M_c(x) = M_1^d(x) + M_2^d(x).
\end{equation}
Using  the Brun-Titchmarsh inequality, we have that
\begin{eqnarray*}
M_1^d(x)  & \ll &  \frac{x}{\log x } \sum_{\substack{x^c < \ell^k \leq x^{1-d}(\log x)^{r} \\ \ell \textrm{ prime, } k\geq 2 }}  \frac{ \log \ell}{\ell^{k-1}(\ell-1)} \\
& \le &  \frac{x}{\log x}  \sum_{\substack{\ell \leq x^{(1-d)/2}(\log x)^{r/2}\\ \ell \textrm{ prime}}} 2\log\ell \sum_{k\ge c\log x/\log \ell} \frac{1}{\ell^k}\\
 & \le & \frac{x}{\log x}   \sum_{\ell \leq x^{(1-d)/2}(\log x)^{r/2}} \frac{4\log x}{x^c}\\
 & = & 4x^{1-c}\pi\left(x^{(1-d)/2}(\log x)^{r/2}\right)\\
 & \ll & \frac{x^{1-c+(1-d)/2}}{(\log x)^{1-r/2}}.
\end{eqnarray*}
On the other hand, for an integer $m>x^{1-d}(\log x)^{\alpha}$, we have that 
$$
\pi (x;m,1) < \sum_{\substack{n \leq x\\  n \equiv 1 \pmod m }} 1 \leq \frac{x}{m} < \frac{x^d}{(\log x)^{\alpha}}.
$$ 
Hence, 
\begin{eqnarray*}
M_2^d(x) & < & \frac{x^d}{(\log x)^{\alpha}} \sum_{\substack{x^{1-d}(\log x)^{\alpha} < \ell^k \le x \\ \ell \textrm{ prime, } k\geq 2 }}  \log \ell \\
& \ll &  \frac{x^d}{(\log x)^{\alpha}} (\log x) \pi(\sqrt{x}) \ll \frac{x^{d + \frac{1}{2} }}{(\log x)^{\alpha}}.
\end{eqnarray*}
Using \eqref{sumas}, we obtain 
$$
L_c(x)-M_c(x)= O\left( \frac{x^{1-c+(1-d)/2}}{(\log x)^{1-r/2}} + \frac{x^{d + \frac{1}{2}}}{(\log x)^{r}}\right).$$
We take $d=2/3(1-c)$ and then both exponents of $x$ above are equal and evaluate to $7/6-2/3 c$. Taking $r=0$ when $c<1/4$ and $r=2/3$ when $c=1/4$, we obtain the desired estimate.  
\end{proof}

\begin{lemma}
\label{BT}
Assume that $c\in (0,1/2]$. Then, for $B>0$,  we have 
$$
L\left(x;{x^c}/{(\log x)^B}, x^c\right) =O\left(\frac{x \log \log x}{\log x} \right), \quad (x \rightarrow \infty).
$$
 \end{lemma}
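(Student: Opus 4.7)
The plan is to split $L\bigl(x;x^{c}/(\log x)^{B},x^{c}\bigr)$ according to whether the von~Mangoldt weight comes from a genuine prime or a proper prime power, and then to apply Brun--Titchmarsh together with Mertens' theorem.

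First I would separate the sum as
$$
L\bigl(x;x^{c}/(\log x)^{B},x^{c}\bigr)=S_{1}(x)+S_{2}(x),
$$
where $S_{1}(x)$ ranges over primes $\ell\in\bigl(x^{c}/(\log x)^{B},x^{c}\bigr]$ with $\Lambda(\ell)=\log\ell$, and $S_{2}(x)$ ranges over proper prime powers $\ell^{k}$ ($k\ge2$) in the same interval. For $S_{2}$ I would bound $\pi(x;\ell^{k},1)$ by Brun--Titchmarsh. Since $\ell^{k}\le x^{c}\le x^{1/2}$, one has $\log(x/\ell^{k})\gg \log x$, so
$$
\pi(x;\ell^{k},1)\ll\frac{x}{\varphi(\ell^{k})\log x}\ll\frac{x}{\ell^{k-1}(\ell-1)\log x}.
$$
Summing over all prime powers with $k\ge 2$ the resulting series $\sum_{\ell,k\ge2}\log\ell/(\ell^{k-1}(\ell-1))$ converges, and $S_{2}(x)\ll x/\log x$; this contribution is absorbed in the claimed error term.

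For the main sum $S_{1}$ I would again invoke Brun--Titchmarsh: since $\log(x/\ell)\ge(1-c)\log x\gg\log x$ uniformly for $\ell\le x^{c}$ with $c\le 1/2$, we get
$$
\pi(x;\ell,1)\ll\frac{x}{(\ell-1)\log x},
$$
so that
$$
S_{1}(x)\ll\frac{x}{\log x}\sum_{x^{c}/(\log x)^{B}<\ell\le x^{c}}\frac{\log\ell}{\ell-1}.
$$
Noting that every prime $\ell$ in the range satisfies $\log\ell=c\log x+O(\log\log x)$, I can pull the factor $c\log x$ out of the sum at the cost of a lower order term, reducing matters to estimating $\sum 1/(\ell-1)$ over the range $(x^{c}/(\log x)^{B},x^{c}]$.

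The heart of the argument is this last sum, which I would handle by Mertens' theorem. With $y=x^{c}/(\log x)^{B}$ and $z=x^{c}$,
$$
\sum_{y<\ell\le z}\frac{1}{\ell}=\log\log z-\log\log y+O\!\left(\frac{1}{\log y}\right)=\log\!\left(\frac{c\log x}{c\log x-B\log\log x}\right)+O\!\left(\frac{1}{\log x}\right),
$$
and the argument of the outer logarithm is $1+O(\log\log x/\log x)$, yielding a total contribution of $O(\log\log x/\log x)$. Multiplying by the pulled-out factor $c\log x$ gives $O(\log\log x)$ for the inner sum, and then by the factor $x/\log x$ from Brun--Titchmarsh the final bound is $O(x\log\log x/\log x)$, as required. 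I do not anticipate a genuine obstacle; the only mild subtlety is verifying that Brun--Titchmarsh is applicable uniformly in the range $\ell\le x^{c}\le x^{1/2}$ (which is why the hypothesis $c\le 1/2$ enters), and tracking the cancellation of logarithms to obtain precisely the $\log\log x$ factor rather than a larger saving.
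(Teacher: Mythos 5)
Your proof is correct and is essentially the argument the paper is invoking: the paper simply cites equation (3) of Goldfeld's article, which is exactly this Brun--Titchmarsh plus Mertens computation (with the prime-power contribution absorbed into $O(x/\log x)$). Your write-up supplies the details the paper leaves to the reference, and no step is missing.
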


 \begin{proof} 
 This follows immediately from the Brun-Titchmarsh inequality (see, for example, equation (3) in \cite{Gold}).
 \end{proof}

\begin{lemma}\label{BV}
Assume that $c\in (0,1/2]$. Then, there exists $B>0$ such that
$$
L\left(x;1,{x^c}/{(\log x)^B}\right) = cx+O\left(\frac{x \log \log x}{\log x} \right), \quad (x \rightarrow \infty).
$$
\end{lemma}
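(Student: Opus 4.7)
The plan is to apply the Bombieri--Vinogradov theorem and then evaluate the resulting main term via standard Mertens-type estimates. The key observation is that the hypothesis $c\le 1/2$ forces the cutoff $y:=x^c/(\log x)^B$ to satisfy $y\le x^{1/2}/(\log x)^B$, so $y$ lies within the Bombieri--Vinogradov level of distribution. Concretely, for any $A>0$ there exists $B=B(A)>0$ such that
$$
\sum_{m\le x^{1/2}/(\log x)^B}\max_{z\le x}\left|\pi(z;m,1)-\frac{\operatorname{li}(z)}{\phi(m)}\right|\ll_A \frac{x}{(\log x)^A}.
$$
I would fix, say, $A=3$, take the corresponding $B$, and use it both as the BV threshold and in the cutoff defining $L$.

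With this choice, since $\Lambda(m)\le \log x$ on the range of summation, summing the displayed BV inequality against $\Lambda(m)$ yields
$$
L(x;1,y)=\operatorname{li}(x)\sum_{m\le y}\frac{\Lambda(m)}{\phi(m)}+O\!\left(\frac{x}{(\log x)^{2}}\right).
$$
The arithmetic sum on the right is standard: the contribution from prime powers $p^k$ with $k\ge 2$ is absolutely bounded because $\sum_{p}\sum_{k\ge 2}\log p/(p^{k-1}(p-1))$ converges, while for primes the identity $1/(p-1)=1/p+O(1/p^2)$ combined with Mertens' theorem $\sum_{p\le y}(\log p)/p=\log y+O(1)$ gives
$$
\sum_{m\le y}\frac{\Lambda(m)}{\phi(m)}=\log y+O(1)=c\log x-B\log\log x+O(1)=c\log x+O(\log\log x).
$$

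Finally, using $\operatorname{li}(x)=x/\log x+O(x/(\log x)^2)$, multiplying the two expansions gives exactly
$$
L(x;1,y)=cx+O\!\left(\frac{x\log\log x}{\log x}\right),
$$
as desired. There is no serious obstacle: the technical content is entirely packaged in Bombieri--Vinogradov, and the only point requiring attention is synchronizing the single parameter $B$, which appears simultaneously as the BV exponent and as the exponent in the cutoff of $L$. The ``\emph{there exists} $B>0$'' quantifier in the lemma is written precisely to accommodate this coupling.
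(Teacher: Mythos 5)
Your proof is correct and follows exactly the route the paper intends: the paper's own ``proof'' is a one-line citation of the Bombieri--Vinogradov theorem via equation (2) of Goldfeld's paper, and your argument (BV at level $x^{1/2}/(\log x)^B$ with $A=3$, summed against $\Lambda(m)\le\log x$, plus Mertens' estimate $\sum_{m\le y}\Lambda(m)/\phi(m)=\log y+O(1)$) is precisely the standard computation being invoked there. No issues.
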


\begin{proof} This follows easily from the Bombieri-Vinogradov theorem (see, for example, equation (2) in \cite{Gold}).
\end{proof}

\medskip 

\noindent {\it Proof of Theorem \ref{primo}}: We have (see p. 23 in \cite{Gold}),

\begin{equation}\label{basico}
 L(x;1,x) = x + O\left(\frac{x}{\log x}\right), \quad (x \rightarrow \infty).
\end{equation}
Take $B>0$ as in Lemma \ref{BV}. 
Since 
$$
L(x;1,x) =L\left(1,\frac{x^c}{(\log x)^B}\right)  + L\left(\frac{x^c}{(\log x)^B}, x^c\right) + L(x;x^c,x),
$$
the result follows by combining \eqref{primera reduccion} and Lemmas \ref{log}, \ref{BT} and \ref{BV}.
\qed

\section{Proof of Theorem \ref{compuesto}}

\subsection{The upper bound}

Let $x$ be large. It is sufficient to prove  the upper bound indicated at \eqref{eq:main} for the number of integers $n\in {\mathcal A}_{k,a}\cap [x/2,x]$, since then the upper bound will follow by changing $x$ to $x/2$, then to $x/4$ and so on, and summing up the resulting estimates.
So, we assume that $n\ge x/2$ is in ${\mathcal A}_{k,a}(x)$. 
Then $n=p_1\cdots p_k\le x$, where $p_1\le p_2\le \cdots \le p_k$, and $p_i=p\lambda_i+1$ for $i=1,\ldots,k$, where 
$$
p>n^{a}>(x/2)^{a}.
$$  
Note that
$$
p^k \lambda_1\cdots \lambda_k\le \phi(n)<n<x.
$$
Thus, $p<x^{1/k}$. Let ${\mathcal B}_1(x)$ be the set of such $n\le x$ such that $\lambda_k\le x^{\delta}$, where $\delta=\delta_{k}=15(k-1)/(32 k^2)$. Since $\lambda_1\le \cdots \le \lambda_k$, we get that $\lambda_i\le x^{\delta}$ for all $i=1,\ldots,k$. This shows that
\begin{equation}
\label{eq:B2}
\#{\mathcal B}_1(x)\le \pi(x^{1/k}) (x^{\delta})^k<x^{1/k+15(k-1)/(32k)}=o(x^{1-a(k-1)})\quad (x\to\infty),
\end{equation}
where we used the fact that $1/k+15(k-1)/(32k)<1-a(k-1)$, which holds for all $k\ge 2$ and $a\in (0,17/(32k))$.

From now on, we assume that $n\in {\mathcal B}_2(x)=\left({\mathcal A}_k\cap [x/2,x]\right)\backslash {\mathcal B}_1(x)$. Fix  the primes $p_1\le \cdots\le p_{k-1}$. Then $p$ is fixed, $p_k\le x/(p_1\ldots p_{k-1})$ and $p_k\equiv 1\pmod p$. The number of such primes is, by the Brun-Titchmarsch theorem (see \cite{MV}), at most
$$
\pi(x/(p_1\ldots p_{k-1}); p,1)\le \frac{2x}{(p-1) p_1\ldots p_{k-1} \log(x/(pp_1\ldots p_{k-1}))}.
$$
Since $x/(p p_1\ldots p_{k-1})>\lambda_k>x^{\delta}$, we get that the last bound is at most
$$
\ll \frac{x}{(\log x)pp_1\ldots p_{k-1}}.
$$
Keeping $p$ fixed and summing up the above bound over all ordered $k-1$-tuples of primes $(x/2)^{a}<p_1\le \cdots\le p_{k-1}\le x$ such that $p_i\equiv 1\pmod p$ for $i=1,\ldots,k-1$, we get a bound of 
\begin{equation}
\label{eq:bound}
\frac{x}{(\log x)p}\left(\sum_{\substack{q\le x\\ q\equiv 1\pmod p}} \frac{1}{q}\right)^{k-1}\ll
\frac{x(\log\log x)^{k-1}}{(\log x)p^k},
\end{equation}
where we used the fact that
$$
\sum_{\substack{q\le x\\ q\equiv 1\pmod p}} \frac{1}{q}\ll \frac{\log\log x}{p}
$$
uniformly in $(x/2)^{a}\le p\le x^{1/k}$, which follows from the Brun-Titchmarsch theorem by partial summation. Summing up the above bound \eqref{eq:bound} over all $p>(x/2)^{a}$ gives
\begin{eqnarray}
\label{eq:B3}
\# {\mathcal B}_2(x) & \ll & \frac{x(\log\log x)^{k-1}}{\log x} \sum_{(x/2)^{a}<p\le x^{1/k}} \frac{1}{p^k}\nonumber\\
&  \ll &   \frac{x(\log\log x)^{k-1}}{\log x}  \int_{(x/2)^{a}}^{x^{1/k}} \frac{d \pi(t)}{t^k} \nonumber\\
& \ll & \frac{x(\log\log x)^{k-1}}{\log x}  \left( \frac{1}{t^{k-1} \log t} \Big|_{t=(x/2)^{a}}^{t=x^{1/k}}+\int_{(x/2)^{a}}^{x^{1/k}} \frac{dt}{t^k \log t}\right)\nonumber\\
&  \ll &  \frac{x(\log\log x)^{k-1}}{\log x}\left(\frac{1}{x^{a(k-1)}\log x}\right)\nonumber\\
& \ll & \frac{x^{1-a(k-1)}(\log\log x)^{k-1}}{(\log x)^2}.
\end{eqnarray}
The upper bound follows from \eqref{eq:B2} and \eqref{eq:B3}.

\subsection{The lower bound}

The following result is Lemma 2.1 in \cite{BS}. 

\begin{lemma}
\label{lem:BS}
There exist functions $C_2(\nu)>C_1(\nu)>0$ defined for all real numbers $\nu\in (0,17/32)$ such that for every integer $u\ne 0$ and positive real number $K$, the inequalities 
$$
\frac{C_1(\nu)y}{p\log y}<\pi(y;p,u)<\frac{C_2(\nu) y}{p\log y}
$$
hold for all primes $p\le y^{\nu}$ with $O(y^{\nu}/(\log y)^K)$ exceptions, where the implied constant depends on $u,~\nu,~K$. Moreover, for any fixed $\varepsilon>0$, these functions can be chosen to satisfy the following properties:
\begin{itemize}
\item $C_1(\nu)$ is monotonic decreasing, and $C_2(\nu)$ is monotonic increasing;
\item $C_1(1/2)=1-\varepsilon$ and $C_2(1/2)=1+\varepsilon$.
\end{itemize}
\end{lemma}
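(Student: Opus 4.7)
The plan is to derive this refined Brun--Titchmarsh statement as a consequence of a Bombieri--Vinogradov-type theorem whose level of distribution exceeds $1/2$. The natural input is the theorem of Bombieri--Friedlander--Iwaniec (and subsequent refinements by Fouvry), which asserts that for any fixed nonzero integer $u$, any $A>0$, and any $\nu < 17/32$,
\begin{equation*}
\sum_{\substack{q \le y^\nu \\ (q,u)=1}} \left| \pi(y;q,u) - \frac{\pi(y)}{\phi(q)} \right| \ll_{u,\nu,A} \frac{y}{(\log y)^A}.
\end{equation*}
The exponent $17/32$ is precisely the level reachable by the dispersion method combined with Weil-type bounds on Kloosterman sums. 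Restricting the moduli to primes and using the prime number theorem, the expected main term satisfies $\pi(y)/\phi(p) = (1+o(1))\, y/(p\log y)$ uniformly in $p\le y^\nu$.

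Next, I would pass from this $L^1$-average bound to the pointwise exceptional-set statement by a standard dyadic Chebyshev argument. Choose $C_1(\nu), C_2(\nu)$ so that failure of the double inequality in the lemma forces $|\pi(y;p,u) - \pi(y)/\phi(p)| \ge \eta(\nu)\, y/(p\log y)$ for some $\eta(\nu)>0$. On a dyadic block $p\in[P,2P]$ with $P\le y^\nu$, if $N_P$ primes are exceptional, their collective contribution to the BFI sum is at least $N_P \cdot \eta y/(2P\log y)$. Comparing with the upper bound above gives
\begin{equation*}
N_P \cdot \frac{\eta y}{2P\log y} \ll \frac{y}{(\log y)^A}, \qquad N_P \ll \frac{P}{(\log y)^{A-1}}.
\end{equation*}
Summing over dyadic scales $P \le y^\nu$ yields $O(y^\nu/(\log y)^{A-1})$ exceptional primes in total, and choosing $A = K+1$ establishes the stated bound.

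For the supplementary properties: when $\nu<1/2$, the classical Bombieri--Vinogradov theorem already produces the inequalities with $C_1(\nu),C_2(\nu)$ arbitrarily close to $1$, which accounts for the prescribed values $C_1(1/2)=1-\varepsilon$ and $C_2(1/2)=1+\varepsilon$. As $\nu$ increases through $[1/2,17/32)$, the quantitative constants in the BFI input deteriorate, which forces $C_1(\nu)$ to shrink and $C_2(\nu)$ to grow. Replacing $C_1(\nu)$ by $\inf_{\nu'\le\nu} C_1(\nu')$ and $C_2(\nu)$ by $\sup_{\nu'\le\nu}C_2(\nu')$ enforces the required monotonicity without invalidating the inequalities.

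The central obstacle is the initial ingredient: establishing a level-of-distribution theorem for primes in arithmetic progressions to a \emph{fixed} residue $u$ that breaks the square-root barrier and reaches $17/32$. This is exactly what Bombieri--Friedlander--Iwaniec achieve by combining Linnik's dispersion method, the large sieve, and the Deligne--Weil bounds on Kloosterman sums; its proof is long and delicate. Once this input is available as a black box, the Chebyshev/dyadic conversion to an exceptional-set statement and the bookkeeping needed to extract monotonic constants with the correct boundary values at $\nu=1/2$ are both routine.
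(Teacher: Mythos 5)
The paper does not prove this lemma at all: it is quoted verbatim as Lemma~2.1 of Banks--Shparlinski \cite{BS}, which in turn rests on the ``Brun--Titchmarsh theorem on average'' of Baker and Harman \cite{HaBa96} (the source of the exponent $17/32$) together with the classical Bombieri--Vinogradov theorem for the range $\nu\le 1/2$. So the relevant question is whether your sketch would actually reconstruct that result, and it would not.

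The fatal step is your very first one. The estimate
$$
\sum_{\substack{q \le y^{\nu} \\ (q,u)=1}} \left| \pi(y;q,u) - \frac{\pi(y)}{\phi(q)} \right| \ll_{u,\nu,A} \frac{y}{(\log y)^{A}}
$$
with $\nu>1/2$ is \emph{not} a theorem of Bombieri--Friedlander--Iwaniec or of anyone else; with the absolute values inside the sum, no level of distribution beyond $y^{1/2}$ is known, and proving such a bound for any fixed $\nu>1/2$ would be a major breakthrough (a nontrivial case of the Elliott--Halberstam conjecture). What BFI actually prove at levels $y^{4/7}$ and beyond are estimates for the \emph{signed} sum $\sum_q (\pi(y;q,u)-\pi(y)/\phi(q))$, or for sums weighted by well-factorable coefficients; neither form permits restriction to an arbitrary subset of moduli, so your Chebyshev/dyadic argument --- which needs to isolate precisely the exceptional primes --- cannot be run from those inputs. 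This is why Baker--Harman do not argue through an $L^1$ discrepancy bound at all: they establish the upper and lower bounds $\pi(y;p,u)\lessgtr C_i(\nu)y/(p\log y)$ for almost all prime moduli directly, by the dispersion method and Kloosterman-sum estimates applied separately to each inequality, and the resulting constants $C_1(\nu),C_2(\nu)$ genuinely degrade away from $1$ once $\nu>1/2$ (they are not $1+o(1)$ as your main-term discussion suggests). Your treatment of the range $\nu\le 1/2$ via Bombieri--Vinogradov, and the monotonicity bookkeeping at the end, are fine, but the core of the lemma --- the range $1/2<\nu<17/32$ --- is exactly the part your proposed input cannot deliver.
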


So, we take $y=x^{1/k}$ and consider primes $p\in {\mathcal I}=[y^{ak},2y^{ak}]$. Then $2y^{ak}=y^{\nu}$, where $\nu=ak+(\log 2)/(ak\log y)<17/32$ for all $x$ sufficiently large. So, let $\varepsilon>0$ be such that
$a<17/32-\varepsilon$ and assume that $x$ is sufficiently large such that $\log 2/(\log y)<\varepsilon/2$. 
Then, by Lemma \ref{lem:BS} with $u=1$ and $K=2$, the set ${\mathcal P}$ of primes $p\le 2y$ such that 
$$
\pi(y;p,1)>\frac{C_1(17/32-\varepsilon/2) y}{p\log y }
$$
contains all primes $p\le 2y^{ak}$ with $O(y^{ak}/(\log y)^2)$ exceptions. Thus, the number of primes $p\in {\mathcal P}\cap {\mathcal I}$ satisfies
$$
\#\left({\mathcal P}\cap {\mathcal I}\right)\ge \pi(2y^{ak})-\pi(y^{ak})-O\left(\frac{y^{a}}{(\log y)^2}\right)>\frac{y^{ak}}{\log y}
$$
for all $x$ sufficiently large independently in $k$ and $a$. Consider numbers of the form  $n=p_1\cdots p_k$, where $p_1<\cdots<p_k\le y$ are all primes congruent to $1$ modulo $p$. Furthermore, it is clear that
$p=P(p_i-1)$ for $i=1,\ldots,k$. Note that $n\le x$. The number of such $n$ is, 
for $p$ fixed,
$$
\binom{\pi(y;p,1)}{k}\gg \left(\frac{y}{p\log y}\right)^k\gg \frac{x}{p^k (\log x)^k}.
$$
Summing up the above bound over $p\in {\mathcal P}\cap {\mathcal I}$, we get that
\begin{eqnarray*}
\#{\mathcal A}_{k,a}(x) & \gg & \frac{x}{(\log x)^k} \sum_{p\in {\mathcal P}\cap {\mathcal I}} \frac{1}{p^k}\gg 
\frac{x}{(\log x)^k} \left(\frac{\#\left({\mathcal P}\cap {\mathcal I}\right)}{y^{ak^2}}\right)\\
& \gg &  \frac{xy^{ak}}{y^{ak^2} (\log x)^{k}\log y}\gg \frac{x^{1-a(k-1)}}{(\log x)^{k+1}},
\end{eqnarray*}
which is what we wanted.

\section{Comments and Remarks}

It is not likely that Goldfeld's method extends to the situation considered in Theorem \ref{compuesto}. As we have seen, the proof of Theorem \ref{primo} is based on the identity \eqref{basico}. Then, Mertens's theorem, the Brun-Titchmarsh inequality and the Bombieri-Vinogradov theorem are used to extract the desired estimate out of it. If we try to follow the same strategy to prove Theorem \ref{compuesto}, for example with $a=1/(2k)$, we are then led to replace the left hand side of \eqref{basico} by
$$
L_k(x):= \sum_{m \leq x^{1/k} }^{ } \Lambda(m) \pi_k(x;m,1), 
$$
where $\pi_k(x;m,1) = \# \{ n \in \mathcal{A}_k(x) : p | n \Rightarrow p \equiv 1 \mod m\}.$ Let $\pi_k(x)$ denote the number of squarefree integers up to $x$ having exactly $k$ prime factors. Then, letting $p_1, p_2, \ldots, p_k$ denote primes, 
\begin{eqnarray}
 L_k(x) &=& \sum_{\substack{p_1< p_2<\cdots< p_k \\ p_1p_2 \cdots p_k  \leq x}}^{ } \sum_{\substack{m\mid {\text{\rm{gcd}}} (p_i-1) \\ 1\le i\le k}}\Lambda(m)   \nonumber \\
 &=& \sum_{\substack{p_1 <p_2 <\cdots < p_k \\ p_1p_2 \cdots p_k  \leq x}}^{ }  \log \left( {\text{\rm{gcd }}}\left(p_i-1:1\le i\le k \right)\right)  \nonumber \\
 &\geq & (\log 2) \pi_k(x) \gg_k \frac{x(\log \log x)^{k+1}}{\log x}, \quad x \rightarrow \infty. \nonumber
\end{eqnarray}
In view of \eqref{crudo},  we see that $L_k(x)$ grows much faster, when $k \geq 2$, than the counting function we are interested in. Hence, it is unlikely that  $L_k(x)$ can be used to obtain information on the growth of $\mathcal{A}_k(x)$.

\medskip 

{\bf Acknowledgement.} We thank N. Billerey for stimulating questions. Part of this work was done during a visit of F.~L. 
at the Mathematics Department of the Universidad de Valparaiso during 2013 with a MEC project from CONICYT. He thanks the people of this Department for their 
hospitality. F.~L. was also partially supported by a Marcos Moshinsky Fellowship and
Projects PAPIIT IN104512, 
CONACyT  163787 and CONACyT 193539. R.M. is partially supported by FONDECYT grant 11110225.



\begin{thebibliography}{9999}



\bibitem{HaBa96}
R.~C. Baker and G.~Harman, ``The {B}run-{T}itchmarsh theorem on average," In {\em Analytic number theory, {V}ol. 1 ({A}llerton {P}ark, {IL},
1995)}, {\bf 138} of {\it Progr. Math.\/}, 39--103. Birkh\"auser Boston, Boston, MA, 1996.
  
\bibitem{BS} W. D. Banks and I. E. Shparlinski, ``On values taken on by the largest prime factor of shifted primes", {\it J. Australian Math. Soc.\/} {\bf 82} (2007), 133--147.

\bibitem{BM} N. Billerey and R. Menares, ``On the modularity of reducible mod $l$ Galois representations", {\it Preprint\/}, arXiv:1309.3717v2.


\bibitem{Fou85} {\'E}tienne Fouvry, ``Th\'eor\`eme de {B}run-{T}itchmarsh: application au th\'eor\`eme de
  {F}ermat," {\it Invent. Math.\/} {\bf 79}, (1985), 383--407.

\bibitem{Gold} M. ~Goldfeld,  ``On the number of primes $p$ for which $p+a$ has a large prime factor", {\it Mathematika\/} {\bf 16} (1969), 23--27. 



\bibitem{MV} H. L. Montgomery and R. C. Vaughan, ``The large sieve", {\it Mathematika\/} {\bf 20} (1973), 119--134.

\end{thebibliography}

\end{document}